\documentclass[a4paper, reqno, 11pt]{amsart}
\usepackage{color}
\makeatletter

\@addtoreset{equation}{section}
\makeatother
\usepackage{setspace}
\usepackage{xcolor}
\usepackage{here}
\usepackage{graphicx}
\usepackage{float}
\usepackage{hyperref}
\usepackage[T1]{fontenc}
\everymath{\displaystyle}
\usepackage{amsmath}
\usepackage{amssymb}
\usepackage{latexsym}
\usepackage{amsthm}
\usepackage{hyperref}
\usepackage{mathtools}
\usepackage{mathrsfs}
\newtheorem{Thm}{Theorem}[section]

\newtheorem{Lem}[Thm]{Lemma}
\newtheorem{Prop}[Thm]{Proposition}

\theoremstyle{definition}

\newtheorem{Rem}[Thm]{Remark}

\begin{document}

\begin{abstract}
We disprove a conjecture stated in a recent paper by Arnold and Villasenor concerning the sum and the maximum of independent and identically distributed half-normal random variables. 
Our method is applicable to generalized gamma distributions. 
\end{abstract}

\title[]{A remark on the comparison of the sum and the maximum of positive random variables}
\author{Kazuki Okamura}
\date{\today}
\address{Department of Mathematics, Faculty of Science, Shizuoka University, 836, Ohya, Suruga-ku, Shizuoka, 422-8529, JAPAN.}
\email{okamura.kazuki@shizuoka.ac.jp}
\keywords{maximum; convolution; half-normal distribution; generalized gamma distribution}
\subjclass[2020]{60E05; 62E10}
\maketitle

\section{Introduction}

Recently, Arnold and Villasenor \cite{AV2026} provided two proofs that the distributions of the sum and the maximum of two independent half-normal random variables are of the same type. 
More specifically, 
it holds that 
\begin{equation}\label{eq:AV}
X_1 + X_2 = \sqrt{2} \max\{X_1, X_2\} \quad \textup{ in distribution}
\end{equation}
if $X_1$ and $X_2$ are independent and have  a common half-normal distribution. 
They also showed that under some regularity conditions, \eqref{eq:AV} implies that $X_1$ has a half-normal distribution. 
In \cite[Section 4]{AV2026}, they conjectured that for $n \ge 3$, 
\begin{equation}\label{eq:AV-cinj}
X_1 + \cdots + X_n = (n!)^{1/n} \max\{X_1, \dots, X_n\} \quad \textup{ in distribution}
\end{equation}
if $X_1, \dots, X_n$ are independent and have  a common half-normal distribution. 
Both \eqref{eq:AV} and \eqref{eq:AV-cinj} are invariant under a common positive rescaling of the random variables. 
Thus, the scale parameter of the half-normal distribution is left unrestricted here.

The purpose of this note is to disprove the conjecture. 
Our arguments are applicable to a class of distributions containing all generalized gamma distributions, which are originally introduced by \cite{Stacy1962}. 

Assume that $X_i, i \ge 1$, are independent and identically distributed random variables supported on $(0,\infty)$. 
Let $S_n \coloneqq \sum_{i=1}^{n} X_i$ and $M_n \coloneqq \max_{1 \le i \le n} X_i$ for $n \ge 1$. 

Let $\Gamma$ be the gamma function and define 
$h(x) \coloneqq \frac{\log \Gamma(x+1)}{x}, x > 0$. 

\begin{Thm}\label{thm:dis-AV}
Assume that $X_1$ has a strictly positive density $f$ supported on
$(0,\infty)$ and that there exist four positive constants $c_1, c_2, \alpha$, and $\beta$ such that 
\begin{equation}\label{eq:condition-zero-density} 
\lim_{x \to +0} \frac{f(x)}{x^{\alpha-1}} = c_1.  
\end{equation}  
\begin{equation}\label{eq:condition-tail-density}
\lim_{x \to \infty} \frac{-\log f(x)}{x^{\beta}} = c_2.  
\end{equation}  
Assume that $n \ge 2$ and 
\begin{equation}\label{eq:beta-n-relation}
\beta < \frac{\log n}{\log n + h(\alpha) - h(n\alpha)}. 
\end{equation}
Then there is no constant $C$ such that $S_n = CM_n$ in distribution. 
\end{Thm}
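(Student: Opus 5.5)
The plan is to compare the two distributions at the two ends of $(0,\infty)$. Suppose, for contradiction, that $S_n = CM_n$ in distribution for some constant $C$. Since $X_i>0$, we have $C>0$. The idea is that the behaviour near $0$ pins down $C$ exactly in terms of $\alpha$ and $n$, while the tail behaviour pins down $C$ in terms of $\beta$ and $n$. Condition \eqref{eq:beta-n-relation} is exactly what makes these two values incompatible.

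\emph{Step 1 (behaviour at $0$).} By \eqref{eq:condition-zero-density}, $F(x)\coloneqq P(X_1\le x)\sim c_1x^\alpha/\alpha$ as $x\to+0$, so
\[
P(M_n\le x)=F(x)^n\sim (c_1/\alpha)^n x^{n\alpha}.
\]
For the sum, I convolve the density. Write $y_i=xt_i$ and use dominated convergence on the simplex; this is legitimate because $f(y)\le K y^{\alpha-1}$ on $(0,1]$. The density of $S_n$ is then $\sim c_1^n\Gamma(\alpha)^n x^{n\alpha-1}/\Gamma(n\alpha)$, which gives
\[
P(S_n\le x)\sim c_1^n\Gamma(\alpha)^n x^{n\alpha}/\Gamma(n\alpha+1).
\]
Equating $P(S_n\le x)=P(M_n\le x/C)$ and letting $x\to+0$ yields $C^{n\alpha}=\Gamma(n\alpha+1)/\Gamma(\alpha+1)^n$, that is,
\[
\log C = h(n\alpha)-h(\alpha).
\]
Moreover $h$ is strictly increasing, since $\log\Gamma(x+1)$ is strictly convex and vanishes at $0$, so its chord slope from $0$ increases. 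Hence $C>1$.

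\emph{Step 2 (tail behaviour).} From \eqref{eq:condition-tail-density} I first get $-\log P(X_1>x)\sim c_2x^\beta$. Then $-\log P(M_n>x)\sim c_2x^\beta$ as well, since $P(X_1>x)\le P(M_n>x)\le nP(X_1>x)$. The claim for the sum is
\[
-\log P(S_n>x)\sim c_2\min(1,n^{1-\beta})\,x^\beta .
\]
\begin{itemize}
\item \emph{Lower bound on the probability.} Use $P(S_n>x)\ge P(X_1>x)$, and also $P(S_n>x)\ge P(X_1>x/n)^n$.
\item \emph{Upper bound on the probability.} For each $\varepsilon>0$, bound $f(y)\le K_\varepsilon e^{-(c_2-\varepsilon)y^\beta}$ for $y\ge1$, and $f(y)\le K_\varepsilon y^{\alpha-1}$ near $0$. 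Then estimate the $n$-fold integral over $\{\sum y_i>x\}$ using
\[
\sum y_i^\beta\ \ge\ \min(1,n^{1-\beta})\Bigl(\sum y_i\Bigr)^\beta,
\]
which follows from subadditivity of $t\mapsto t^\beta$ if $\beta\le1$ and from Jensen if $\beta>1$. Coordinates lying in $(0,1]$ cost only a bounded factor, and the remaining integration contributes only a polynomial factor in $x$, which is negligible on the logarithmic scale.
\end{itemize}

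\emph{Step 3 (conclusion).} From $P(S_n>x)=P(M_n>x/C)$ I get
\[
c_2\min(1,n^{1-\beta})=c_2C^{-\beta}.
\]
If $\beta\le1$, this forces $C=1$, contradicting $C>1$ from Step 1. If $\beta>1$, it forces $\beta\log C=(\beta-1)\log n$. Since $S_n<nM_n$ almost surely, we have $C<n$, so $\log n - \log C > 0$ and $\beta=\log n/(\log n-\log C)$. Substituting $\log C=h(n\alpha)-h(\alpha)$ gives exactly equality in \eqref{eq:beta-n-relation}, contradicting the strict inequality assumed there.

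I expect the main obstacle to be Step 2, specifically the upper bound on $P(S_n>x)$, because only logarithmic asymptotics of $f$ are assumed. The Jensen/subadditivity inequality above together with a crude polynomial bound on the volume factor should suffice, but the region where some coordinates are small needs some care. Step 1 is a routine Tauberian-free computation via dominated convergence.
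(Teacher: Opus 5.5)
Your proof is correct, but your tail argument is organized differently from the paper's, and the difference matters.

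Step 1 is the same as the paper's Lemma. The paper rescales $P(S_n\le x)$ over the simplex directly rather than going through the density of $S_n$, which is only a cosmetic difference.

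For the tails, the paper argues one-sidedly and by cases. For $\beta\ge1$ it pairs an upper bound on $P(S_n>x)$ (Jensen plus the exponential Chebyshev inequality) with a lower bound on $P(M_n>x/C_{\alpha,n})$, and it needs the strict inequality $C_{\alpha,n}^{-\beta}<n^{1-\beta}$. For $0<\beta<1$ it cites the Denisov--Foss--Korshunov result $\liminf_{x\to\infty}P(S_n>x)/P(M_n>x)<\infty$ for laws without exponential moments, combined with $C_{\alpha,n}>1$.

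You instead prove two-sided logarithmic asymptotics for $S_n$. You add the elementary lower bound $P(S_n>x)\ge P(X_1>x/n)^n$, and you treat both regimes at once through $\sum y_i^\beta\ge\min(1,n^{1-\beta})(\sum y_i)^\beta$. This has two benefits:
\begin{itemize}
\item It removes the external citation in the case $\beta<1$.
\item It turns the comparison into the identity $\min(1,n^{1-\beta})=C^{-\beta}$. The only fact you then use about $\beta$ is that it differs from the right-hand side $\beta^\ast$ of \eqref{eq:beta-n-relation}.
\end{itemize}
Your argument therefore proves a stronger statement: no $C$ exists whenever $\beta\neq\beta^\ast$. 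For instance, $\alpha=1$, $\beta=3$, $n=2$ gives $C=\sqrt2$ and $2^{-3/2}\neq 1/4$. This case lies outside the paper's theorem, since $\beta^\ast=2$ there, but your argument covers it. What the paper's $\beta<1$ route buys is that it rests on a general heavy-tail principle rather than on the stretched-exponential rate.

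One implementation point needs fixing. The hypotheses do not make $f$ bounded on compact subsets of $(0,\infty)$. So the pointwise bound $f(y)\le K_\varepsilon e^{-(c_2-\varepsilon)y^\beta}$ for all $y\ge1$ is not available in general. Likewise, in Step 1 the domination $f(y)\le Ky^{\alpha-1}$ should be asserted only on some $(0,\delta]$, which suffices as $x\to+0$.

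The clean way to carry out your upper bound is the paper's exponential Chebyshev step, applied in both regimes. Take $m=\min(1,n^{1-\beta})$ and $\lambda<c_2$. Then $P(S_n>x)\le P(\sum_i X_i^\beta\ge m x^\beta)\le e^{-\lambda m x^\beta}(E[e^{\lambda X_1^\beta}])^n$. This needs only $E[e^{\lambda X_1^\beta}]<\infty$, and it removes the bookkeeping for small coordinates and polynomial volume factors altogether.
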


If $X_1$ has the standard half-normal distribution, 
then \eqref{eq:condition-zero-density} and \eqref{eq:condition-tail-density} hold with $c_1 = \sqrt{2/\pi}, c_2 = 1/2, \alpha = 1$, and $\beta = 2$. 
For $\alpha = 1$ and $\beta = 2$, 
condition \eqref{eq:beta-n-relation} is equivalent to $n \ge 3$; 
this can be verified by induction on $n$. 
For every $\beta \in (0,1]$, \eqref{eq:beta-n-relation} holds for every $n \ge 2$. 
We remark that $\log n + h(\alpha) - h(n\alpha) > 0$ for every $n \ge 2$ by Proposition \ref{prop:h-increase} (i) below. 

We show Theorem \ref{thm:dis-AV} by contradiction. 
The main idea of the proof is to first compare the asymptotic behavior of $P(S_n \le x)$ and $P(M_n \le x)$ as $x \to +0$ and then compare the asymptotic behavior of $P(S_n > x)$ and $P(M_n > x)$ as $x \to +\infty$. 

\section{Proof}

First, we compare the asymptotic behavior of $P(S_n \le x)$ and $P(M_n \le x)$ as $x \to +0$. 
Let $C_{\alpha, n} \coloneqq \frac{\Gamma(n\alpha+1)^{1/(n\alpha)}}{\Gamma(\alpha+1)^{1/\alpha}}$. 

\begin{Lem}\label{lem:const-unique-zero}
If there exists a constant $C$ such that $S_n = CM_n$ in distribution, 
then $C = C_{\alpha,n}$. 
\end{Lem}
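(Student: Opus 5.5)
We compare the small-$x$ behaviour of the two distribution functions $P(S_n \le x)$ and $P(CM_n \le x)$ as $x \to +0$; both will be regularly varying of index $n\alpha$, and matching their leading constants pins down $C$.

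\emph{Maximum.} By \eqref{eq:condition-zero-density}, $F(x) \coloneqq P(X_1 \le x) = \int_0^x f(t)\,dt \sim \frac{c_1}{\alpha} x^{\alpha}$ as $x \to +0$. Hence
\[
P(CM_n \le x) = F(x/C)^n \sim \left(\frac{c_1}{\alpha}\right)^n C^{-n\alpha} x^{n\alpha}.
\]

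\emph{Sum.} We show by induction on $k$ that the density $f_k$ of $S_k$ satisfies
\[
f_k(x) \sim \frac{(c_1\Gamma(\alpha))^k}{\Gamma(k\alpha)} x^{k\alpha-1}, \qquad x \to +0 .
\]
The case $k=1$ is \eqref{eq:condition-zero-density}. For the inductive step write $f_{k+1}(x) = \int_0^x f_k(y) f(x-y)\,dy$ and substitute $y = xu$, which gives
\[
f_{k+1}(x) = x\int_0^1 f_k(xu)\, f(x(1-u))\,du .
\]
Since $x$ is small, both arguments lie in $(0,x]$, where the asymptotics give uniform bounds $f_k(xu) \le K (xu)^{k\alpha-1}$ and $f(x(1-u)) \le K(x(1-u))^{\alpha-1}$ for $x$ below some threshold. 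After dividing by $x^{(k+1)\alpha-1}$ the integrand is therefore dominated by the integrable function $K^2 u^{k\alpha-1}(1-u)^{\alpha-1}$. Dominated convergence then yields the Beta integral $B(k\alpha,\alpha) = \Gamma(k\alpha)\Gamma(\alpha)/\Gamma((k+1)\alpha)$, which gives the claimed constant. Note that only behaviour on $(0,x]$ enters, so no global integrability issue arises. Integrating once more,
\[
P(S_n \le x) \sim \frac{(c_1\Gamma(\alpha))^n}{\Gamma(n\alpha+1)} x^{n\alpha}.
\]

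\emph{Comparison.} If $S_n = CM_n$ in distribution, the two asymptotics must agree, so
\[
\frac{(c_1\Gamma(\alpha))^n}{\Gamma(n\alpha+1)} = \frac{c_1^n}{\alpha^n} C^{-n\alpha}.
\]
Using $\alpha\Gamma(\alpha) = \Gamma(\alpha+1)$, this becomes $C^{n\alpha} = \Gamma(n\alpha+1)/\Gamma(\alpha+1)^n$, that is, $C = C_{\alpha,n}$; note that $C>0$ necessarily. The main obstacle is justifying the convolution asymptotics uniformly, i.e.\ the domination step above. An alternative route is a Tauberian argument: compute the Laplace transform $E[e^{-sX_1}] \sim c_1\Gamma(\alpha)s^{-\alpha}$ as $s\to\infty$ via Karamata's Abelian theorem, take its $n$-th power, and apply Karamata's Tauberian theorem to the monotone function $P(S_n \le x)$.
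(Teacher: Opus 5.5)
Your proposal is correct and follows essentially the same route as the paper. Both arguments match the $x^{n\alpha}$-coefficients of $P(S_n\le x)$ and $P(M_n\le x/C)$ as $x\to+0$, via the scaling $x_i=xu_i$ and dominated convergence with the local bound $f(t)\le Kt^{\alpha-1}$ near $0$. The only difference is how the constant for the sum is evaluated: the paper uses the Dirichlet integral $\int_{D_2}\prod_{i=1}^n u_i^{\alpha-1}\,du_1\cdots du_n=\Gamma(\alpha)^n/\Gamma(n\alpha+1)$ over the simplex in one step, whereas you iterate Beta integrals on the convolution densities, which also gives the slightly stronger density asymptotics for $S_k$.
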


\begin{proof}
We see that for every $x > 0$, 
\[ P(S_n \le x) = \int_{D_1} \prod_{i=1}^{n} f(x_i) dx_1 \cdots dx_{n},  \]
where $D_1 \coloneqq \{(x_1, \dots, x_n) \in (0,\infty)^n \colon x_1 + \cdots + x_n < x \}$. 
By the change of variables $x_i = x u_i$, 
we obtain that for every $x > 0$, 
\[ P(S_n \le x) = x^{\alpha n} \int_{D_2} \prod_{i=1}^{n} \frac{f(x u_i)}{(x u_i)^{\alpha -1}} \prod_{i=1}^{n} u_i^{\alpha -1} du_1 \cdots du_{n},  \]
where $D_2 \coloneqq \{(u_1, \dots, u_n) \in (0,\infty)^n \colon u_1 + \cdots + u_n < 1 \}$. 
By \eqref{eq:condition-zero-density}, 
we can apply the dominated convergence theorem and obtain that 
\[ \lim_{x \to +0} \int_{D_2} \left| \prod_{i=1}^{n} \frac{f(x u_i)}{(x u_i)^{\alpha -1}}  - c_1^n \right|  \prod_{i=1}^{n} u_i^{\alpha -1} du_1 \cdots du_{n} = 0. \]
Using this together with the identity $\int_{D_2}  \prod_{i=1}^{n} u_i^{\alpha -1} du_1 \cdots du_{n} = \frac{\Gamma(\alpha)^n}{\Gamma(n \alpha +1)}$, 
we obtain that 
\begin{equation}\label{eq:sum-zero}
\lim_{x \to +0} \frac{P(S_n \le x)}{x^{\alpha n}}  = c_1^n \frac{\Gamma(\alpha)^n}{\Gamma(n \alpha +1)}. 
\end{equation} 

We see that for every $x > 0$, 
\[ P(M_n \le x) = P(X_1 \le x)^n =  x^{\alpha n} \left(\int_0^1 \frac{f(xu)}{(xu)^{\alpha-1}} u^{\alpha-1} du \right)^n.  \]
By \eqref{eq:condition-zero-density}, 
we can apply the dominated convergence theorem and obtain that 
\begin{equation}\label{eq:max-zero} 
\lim_{x \to +0} \frac{P(M_n \le x)}{x^{\alpha n}}  = \frac{c_1^n}{\alpha^n}. 
\end{equation}

By \eqref{eq:sum-zero} and \eqref{eq:max-zero}, we see that $C = C_{\alpha,n}$. 
\end{proof}

Second, we compare the asymptotic behavior of $P(S_n > x)$ and $P(M_n > x)$ as $x \to +\infty$. 
By Lemma \ref{lem:const-unique-zero}, 
$S_n = C_{\alpha, n}  M_n$ in distribution. 

Without loss of generality, we may assume that $c_2=1$. 
Indeed, for $\sigma>0$, let $Y_i = \sigma X_i$. 
The density of $Y_i$ is $\widetilde f(x) = \sigma^{-1}f(\sigma^{-1}x)$, and by \eqref{eq:condition-tail-density}, 
 $\lim_{x\to\infty}\frac{-\log \widetilde f(x)}{x^\beta} = \frac{c_2}{\sigma^\beta}$.
Taking $\sigma=c_2^{1/\beta}$ makes this limit equal to $1$. 
Moreover, the identity $S_n = C_{\alpha,n}M_n$ is preserved under this common rescaling. 
Therefore, we relabel the $Y_i$ as the $X_i$.

By the assumption \eqref{eq:condition-tail-density}, 
for every $\epsilon \in (0,1/2)$, it holds that for large $x$, 
$ \exp\left(- (1+\epsilon)^{\beta} x^{\beta} \right) \le f(x) \le \exp\left(- (1-\epsilon)^{\beta} x^{\beta} \right)$.

By integration by parts, 
it holds that for every $\beta > 0$ and $z > 0$, 
\begin{equation*}
\int_{z}^{\infty} \exp(-t^{\beta}) dt = \frac{\exp(-z^{\beta})}{\beta} z^{1 - \beta} - \frac{\beta - 1}{\beta} \int_{z}^{\infty} t^{-\beta} \exp(-t^{\beta})  dt. 
\end{equation*}
Hence, for large $z$, 
\[ \int_{z}^{\infty} \exp(-t^{\beta}) dt \le \frac{\exp(-z^{\beta})}{\beta} z^{1 - \beta} + \frac{|\beta - 1|}{\beta} z^{-\beta}  \int_{z}^{\infty} \exp(-t^{\beta})  dt. \]
We obtain that 
\[  \int_{z}^{\infty} \exp(-t^{\beta}) dt = O\left(  z^{1 - \beta} \exp(-z^{\beta}) \right), \quad z \to \infty, \]
and furthermore 
\[ \int_{z}^{\infty} t^{-\beta} \exp(-t^{\beta}) dt = o\left( z^{1 - \beta} \exp(-z^{\beta}) \right), \quad z \to \infty. \] 

Hence we see that for large $x$, 
\begin{equation}\label{eq:tail-X-lower}
P(X_1 > x)  \ge \frac{1}{2} \frac{\exp(- (1+\epsilon)^{\beta} x^{\beta})}{\beta (1+\epsilon)^{\beta} x^{\beta-1}}
\end{equation}
and
\begin{equation}\label{eq:tail-X-upper}
P(X_1 > x)  \le 2\frac{\exp(- (1-\epsilon)^{\beta} x^{\beta})}{\beta (1-\epsilon)^{\beta} x^{\beta-1}}.
\end{equation}

We give a lower bound for $P(M_n > x)$. 
Since $f$ is strictly positive on $(0,\infty)$, 
$P(X_1 > x) > 0$ for every $x > 0$. 
We see that 
\begin{equation}\label{eq:ratio-max-single} 
\lim_{x \to \infty} \frac{P(M_n > x)}{P(X_1 > x)} = n. 
\end{equation}

By \eqref{eq:tail-X-lower} and \eqref{eq:ratio-max-single}, 
we see that for large $x$, 
\begin{equation}\label{eq:max-infinity-lower}
P(M_n > x) \ge \frac{n \exp(- (1+\epsilon)^{\beta} x^{\beta})}{4\beta (1+\epsilon)^{\beta} x^{\beta-1}}.
\end{equation}

We give an upper bound for $P(S_n > x)$.

We first consider the case where $\beta \ge 1$. 
Since $\beta \ge 1$, by convexity, 
we see that $\sum_{i=1}^{n} X_i^{\beta} \ge n^{1-\beta} S_n^{\beta}$. 
By this and the exponential Chebyshev inequality, 
we see that for every $\epsilon \in (0,1)$, $x > 0$ and $n \ge 1$, 
\begin{align}\label{eq:sum-infinity-upper}
P(S_n > x) &\le P\left( \sum_{i=1}^{n} X_i^{\beta} \ge n^{1-\beta} x^{\beta} \right) \notag\\
&\le \exp\left(-(1-\epsilon)^{\beta} n^{1-\beta} x^{\beta} \right) E\left[ \exp\left((1-\epsilon)^{\beta}X_1^{\beta} \right) \right]^n.
\end{align} 
By \eqref{eq:condition-tail-density}, 
$E\left[ \exp\left((1-\epsilon)^{\beta} X_1^{\beta} \right) \right]< \infty$. 
By \eqref{eq:beta-n-relation}, it holds that $C_{\alpha,n}^{-\beta} < n^{1-\beta}$. 
Now we take a sufficiently small $\epsilon > 0$ such that $(1+\epsilon)^{\beta} C_{\alpha,n}^{-\beta} < (1-\epsilon)^{\beta} n^{1-\beta}$. 
By this, \eqref{eq:max-infinity-lower} and \eqref{eq:sum-infinity-upper}, 
we obtain that 
\[ \lim_{x \to \infty} \frac{P\left(M_n > x/C_{\alpha,n} \right)}{P(S_n > x)} = \infty. \]
This contradicts $S_n = C_{\alpha,n} M_n$ in distribution. 
The proof of Theorem \ref{thm:dis-AV} in the case where $\beta \ge 1$ is completed.

We next consider the case where $0 < \beta < 1$. 
By \eqref{eq:condition-tail-density}, $E[\exp(t X_1)] = \infty$ for every $t > 0$. 
By \cite[Theorem 2]{DFK2008}, 
\begin{equation}\label{eq:subexp} 
\liminf_{x \to \infty} \frac{P(S_n > x)}{P(M_n > x)} < \infty. 
\end{equation}

By Lemma \ref{lem:const-unique-zero}, if there exists a constant $C$ such that $S_n = CM_n$ in distribution, 
then $C = C_{\alpha,n}$. 
By \eqref{eq:subexp} and \eqref{eq:ratio-max-single}, 
we obtain that 
\begin{equation}\label{eq:ratio-contradict}
\limsup_{x \to \infty} \frac{P(X_1 > x)}{P(X_1 > x/C_{\alpha,n})} =  \limsup_{x \to \infty} \frac{P(M_n > x)}{P(M_n > x/C_{\alpha,n})} > 0. 
\end{equation}

By Lemma \ref{lem:property-digamma} (i) below, $h(x)$ is strictly increasing.  
Hence $C_{\alpha,n} > 1$ for $n \ge 2$. 
By \eqref{eq:tail-X-lower} and \eqref{eq:tail-X-upper},  using a sufficiently small $\epsilon > 0$, 
\[ \lim_{x \to \infty} \frac{P(X_1 > x)}{P(X_1 > x/C_{\alpha,n})} = 0. \]
This contradicts \eqref{eq:ratio-contradict}. 
The proof of Theorem \ref{thm:dis-AV} in the case where $0 < \beta < 1$ is completed. 

The following assertions provide properties of $h$. 

\begin{Lem}\label{lem:property-digamma}
(i) $h$ is strictly increasing on $(0,\infty)$.\\
(ii) $g(x) \coloneqq h(\exp(x))$ is strictly convex on $\mathbb R$. 
\end{Lem}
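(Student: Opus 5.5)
The plan is to work with the integral representation
\[ \log\Gamma(x+1) = \int_0^\infty \left( x - \frac{1-e^{-xt}}{1-e^{-t}} \right)\frac{e^{-t}}{t}\,dt, \qquad x > -1, \]
and equivalently with the digamma function $\psi = \Gamma'/\Gamma$, via $\psi(x+1) = -\gamma + \int_0^\infty \frac{e^{-t}-e^{-(x+1)t}}{1-e^{-t}}\,dt$. We write $\phi(x) \coloneqq \log\Gamma(x+1)$, so that $h(x) = \phi(x)/x$. The two facts we need are $\phi(0)=0$ and that $\phi$ is strictly convex, since $\phi''(x) = \psi'(x+1) = \sum_{k\ge1}(x+k)^{-2} > 0$.

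For (i), note that $h(x) = \frac{\phi(x)-\phi(0)}{x-0}$ is the slope of a chord of a strictly convex function from the fixed point $0$. Such slopes are strictly increasing in $x$. Concretely, one computes
\[ h'(x) = \frac{x\phi'(x) - \phi(x)}{x^2}, \]
and the numerator $k(x) = x\phi'(x)-\phi(x)$ satisfies $k(0)=0$ and $k'(x) = x\phi''(x) > 0$. Hence $k > 0$ on $(0,\infty)$, which proves (i).

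For (ii), set $g(s) = e^{-s}\phi(e^s)$. Differentiating,
\[ g'(s) = -e^{-s}\phi(e^s) + \phi'(e^s), \]
\[ g''(s) = e^{-s}\phi(e^s) - \phi'(e^s) + e^s\phi''(e^s). \]
Writing $x = e^s$, strict convexity of $g$ is equivalent to
\[ m(x) \coloneqq \phi(x) - x\phi'(x) + x^2\phi''(x) > 0 \quad \text{for } x > 0. \]
We have $m(0)=0$ and
\[ m'(x) = x\phi''(x) + x^2\phi'''(x) = x\bigl(\phi''(x) + x\phi'''(x)\bigr). \]
It therefore suffices to show $\phi''(x) + x\phi'''(x) > 0$, i.e. that $x \mapsto x\phi''(x)$ is strictly increasing. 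Using the series,
\[ x\phi''(x) = \sum_{k\ge1}\frac{x}{(x+k)^2}, \qquad \frac{d}{dx}\frac{x}{(x+k)^2} = \frac{k-x}{(x+k)^3}. \]
Individual terms are not monotone, so this is the main obstacle. I would first try the integral form $\phi''(x) = \int_0^\infty \frac{t e^{-(x+1)t}}{1-e^{-t}}\,dt$. Integrating by parts gives
\[ \phi''(x) + x\phi'''(x) = \int_0^\infty \frac{t e^{-t}}{1-e^{-t}}(1-xt)e^{-xt}\,dt = \int_0^\infty \left(\frac{t}{e^{t}-1}\right)\frac{d}{dt}\bigl(te^{-xt}\bigr)\,dt = -\int_0^\infty t e^{-xt}\,\frac{d}{dt}\!\left(\frac{t}{e^t-1}\right)dt, \]
where the boundary terms vanish. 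Since $t/(e^t-1)$ is strictly decreasing on $(0,\infty)$ (its derivative is $\frac{e^t(1-t)-1}{(e^t-1)^2}<0$, because $e^{-t} > 1-t$), the last integral is strictly positive. This gives $m' > 0$, hence $m > 0$, and so $g'' > 0$.

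A fallback for the obstacle is the comparison $\sum_k \frac{k-x}{(x+k)^3} \ge \int$ via monotonicity in $k$ of the summand. The integral route above seems cleaner, so I would commit to it, checking only that the boundary terms vanish ($t e^{-xt}\cdot t/(e^t-1) \to 0$ at $0$ and at $\infty$) and that differentiation under the integral is justified by dominated convergence for $x$ in compact subsets of $(0,\infty)$.
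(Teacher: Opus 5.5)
Your proof is correct and follows essentially the same route as the paper. Part (i) is the increasing chord slope from $0$ of the strictly convex function $\log\Gamma(x+1)$, and part (ii) reduces to strict monotonicity of $q(x)=x\psi'(x+1)$, which follows from the integral representation of the trigamma function and the strict decrease of $t/(e^t-1)$. The differences are only cosmetic: you compute $g''$ directly and show $m(x)=\int_0^x u\,q'(u)\,du>0$ via integration by parts, whereas the paper writes $g'$ as the average $e^{-x}\int_0^{e^x}q(s)\,ds$ and reads off monotonicity of $q$ from the substitution $q(s)=\int_0^\infty e^{-r}\rho(r/s)\,dr$.
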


\begin{proof}
(i) Let $\psi$ be the digamma function. 
Then $\frac{d}{dx} \log \Gamma(x+1) = \psi(x+1)$. 

By the Bohr--Mollerup theorem, 
$\log \Gamma(x+1)$ is strictly convex. 
Hence, the secant slope $\frac{\log\Gamma(x+1)-\log\Gamma(1)}{x} = h(x)$ is strictly increasing on $(0,\infty)$.  

(ii) 
Since $h(t) = \frac{1}{t} \int_0^{t} \psi(s+1) ds$, 
we see that 
\[ t h^{\prime} (t) = \psi(t+1) - \frac{1}{t} \int_0^{t} \psi(s+1) ds = \frac{1}{t} \int_0^t s \psi^{\prime}(s+1) ds.  \]
Since $g^{\prime}(x) = \exp(x) h^{\prime}(\exp(x))$, 
it suffices to show that $q(s) \coloneqq s  \psi^{\prime}(s+1)$ is strictly increasing. 
By an integral expression of the trigamma function (\cite[Formula 6.4.1, p.260]{AS1964}), 
\[ \psi^{\prime}(s+1) = \int_0^{\infty}  \frac{r \exp(-sr)}{\exp(r) -1} dr. \]
Hence
\begin{equation}\label{eq:integral-q} 
q(s) = \int_0^{\infty}  \exp(-r) \rho\left(\frac{r}{s} \right) dr,  
\end{equation}
where $\rho(t) \coloneqq \frac{t}{\exp(t) -1}$. 
Since $\rho$ is strictly decreasing, $q$ is strictly increasing. 
\end{proof}

\begin{Prop}\label{prop:h-increase}
Let $a_n \coloneqq \frac{h(n \alpha) -h(\alpha)}{\log n}$ for $n \ge 2$. 
Then \\
(i) $a_n < 1$ for every $n$ and $\lim_{n \to \infty} a_n = 1$.\\
(ii) $(a_n)_n$ is strictly increasing. \\
(iii) There exists $N(\alpha,\beta) \ge 2$ such that \eqref{eq:beta-n-relation} holds if and only if  $n \ge N(\alpha,\beta)$. 
\end{Prop}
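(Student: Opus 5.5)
The plan is to pass to logarithmic variables and use Lemma \ref{lem:property-digamma}. Set $t = \log n$ and $s = \log \alpha$, and write $g(x) = h(e^x)$. Then
\[ a_n = \frac{g(s+t) - g(s)}{t}, \]
which is the secant slope of $g$ over $[s, s+t]$.

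\emph{Part (ii).} By Lemma \ref{lem:property-digamma} (ii), $g$ is strictly convex. Secant slopes of a strictly convex function from a fixed left endpoint are strictly increasing in the right endpoint. Since $t = \log n$ is strictly increasing in $n$, the sequence $(a_n)$ is strictly increasing.

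\emph{Limit in part (i).} Use Stirling's formula, $\log\Gamma(x+1) = x\log x - x + O(\log x)$ as $x \to \infty$. This gives
\[ h(x) = \log x - 1 + O\Bigl(\frac{\log x}{x}\Bigr). \]
Hence $h(n\alpha) - h(\alpha) = \log n + O_\alpha(1)$, and dividing by $\log n$ yields $a_n \to 1$.

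\emph{Bound $a_n < 1$ in part (i).} Since $(a_n)$ is strictly increasing and converges to $1$, we get $a_n < \lim_m a_m = 1$ for every $n$. An independent check: $g'(x) = e^x h'(e^x)$, and the proof of Lemma \ref{lem:property-digamma} gives
\[ t\,h'(t) = \frac{1}{t}\int_0^t q(s)\,ds, \qquad q(s) = \int_0^\infty e^{-r}\rho(r/s)\,dr < \int_0^\infty e^{-r}\,dr = 1, \]
because $\rho < 1$. So $g' < 1$ everywhere, and the mean value theorem gives $a_n < 1$ directly. I would include this second argument as a consistency check.

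\emph{Part (iii).} Rewrite \eqref{eq:beta-n-relation} in terms of $a_n$. Since $1 - a_n > 0$, we have
\[ \log n + h(\alpha) - h(n\alpha) = (1-a_n)\log n > 0, \]
so the condition is equivalent to $\beta(1-a_n) < 1$, that is, $a_n > 1 - 1/\beta$.

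\begin{itemize}
\item If $\beta \le 1$, the right-hand side $1 - 1/\beta \le 0$. Also $a_n > 0$, since $h$ is strictly increasing by Lemma \ref{lem:property-digamma} (i). So the condition holds for all $n$, and we take $N = 2$.
\item If $\beta > 1$, the threshold $1 - 1/\beta$ lies in $(0,1)$. Since $(a_n)$ is strictly increasing with limit $1$, the set $\{n \ge 2 : a_n > 1 - 1/\beta\}$ is nonempty and upward closed. Let $N(\alpha,\beta)$ be its minimum; then the condition holds exactly for $n \ge N(\alpha,\beta)$.
\end{itemize}

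The main obstacle is the convexity step in (ii), but Lemma \ref{lem:property-digamma} (ii) already supplies it. What remains is routine: keeping the Stirling error term uniform, which is fine because $\alpha$ is fixed.
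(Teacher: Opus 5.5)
Your proposal is correct and follows essentially the paper's route: strict convexity of $g(x)=h(e^x)$ from Lemma \ref{lem:property-digamma}(ii), reading $a_n$ as a secant slope of $g$, and Stirling's formula for the limit. The differences are minor. For (ii) you use monotonicity of secant slopes from the fixed left endpoint $\log\alpha$ directly, which is slightly cleaner than the paper's reduction $a_n=\sum_{i<n} b_i/\sum_{i<n} c_i$ to increasing ratios $b_n/c_n$. You get $a_n<1$ from (ii) plus the limit, where the paper's primary argument is the bound $g'<1$ (which you also give). Finally, you spell out (iii) via the equivalence of \eqref{eq:beta-n-relation} with $a_n>1-1/\beta$, which the paper leaves implicit.
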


\begin{proof}
(i) By \eqref{eq:integral-q}, $q(s) < 1$ for every $s > 0$. 
Hence 
\[ g^{\prime}(x) = \frac{1}{\exp(x)} \int_0^{\exp(x)} q(s) ds < 1, \quad x \in \mathbb{R}. \] 
Since $a_n = \frac{g(\log (n\alpha)) - g(\log \alpha)}{\log(n \alpha) - \log \alpha}$, $a_n < 1$. 
By Stirling's formula, 
\[ h(x) = \frac{\log \Gamma(x+1)}{x} = \log x - 1 + O\left(\frac{\log x}{x} \right), \quad x \to \infty. \]
Hence $\lim_{n \to \infty} a_n = 1$. 

(ii) Let $b_n \coloneqq h((n+1) \alpha) -h(n\alpha)$ and $c_n \coloneqq \log (n+1) - \log n$ for $n \ge 1$. 
Since $a_n = \frac{\sum_{i=1}^{n-1} b_i}{\sum_{i=1}^{n-1} c_i}$, it suffices to show that $\left(\frac{b_n}{c_n} \right)_n$ is strictly increasing. 
We see that 
\[ \frac{b_n}{c_n} = \frac{g(\log((n+1)\alpha)) - g(\log(n\alpha))}{\log((n+1)\alpha) - \log(n\alpha)}. \]

By the increasing secant slope property for strictly convex functions, 
$\left(\frac{b_n}{c_n} \right)_n$ is strictly increasing.

(iii) This follows from (i) and (ii). 
\end{proof}

\begin{table}[H]
\centering
\begin{tabular}{|c|ccccc|}
\hline
 $\alpha \backslash \beta$ &  1.0 & 1.5 & 2.0 & 2.5 & 3.0  \\
\hline
0.25 &  2 & 7 &  34 &  117 & 356  \\
0.5 &    2 & 2 &  9 &  24  & 60 \\
0.75 &  2 & 2 &  4 &  10 & 23  \\
1.0 &    2 &  2 &  3 &  6 & 12  \\
1.25 &  2 & 2 &  2 &  4 & 7  \\
1.5 &    2 &  2 &  2 &  3  & 5\\
\hline  
\end{tabular}
\caption{Values of $N(\alpha,\beta)$.}\label{tab:CI3}
\end{table}

\begin{Rem}
Let $n=3$. 
For the standard half-normal distribution, we have an alternative proof. 
It is easy to see that 
\[ E\left[S_3^2 \right] = E[(X_1 + X_2 + X_3)^2] = 3 E[X_1^2] + 6 \left(E[X_1] \right)^2 = 3 + \frac{12}{\pi}. \] 

We also need the following lemma. 
\begin{Lem}
\[ E\left[M_3^2 \right] = 1 + \frac{2\sqrt{3}}{\pi}.  \]
\end{Lem}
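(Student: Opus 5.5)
Here $X_1,X_2,X_3$ are i.i.d.\ standard half-normal, with density $2\phi$ on $(0,\infty)$, where $\phi$ is the standard normal density and $\Phi$ its distribution function. The plan is to write $E[M_3^2]=\int_0^\infty 2x\,P(M_3>x)\,dx$ with $P(M_3\le x)=(2\Phi(x)-1)^3$. A cleaner route is to pass to the full normal vector. Let $Z=(Z_1,Z_2,Z_3)$ be standard Gaussian on $\mathbb R^3$ and set $X_i=|Z_i|$. Then
\[
M_3^2=\max_i Z_i^2=\|Z\|^2\,\max_i \theta_i^2 ,
\]
where $\theta=Z/\|Z\|$ is uniform on the sphere $S^2$ and independent of $\|Z\|$. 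Since $E\|Z\|^2=3$, this gives
\[
E[M_3^2]=3\,E\bigl[\max_i\theta_i^2\bigr],
\]
and it remains to prove that $E[\max_i \theta_i^2]=\tfrac13+\tfrac{2\sqrt3}{3\pi}$.

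For the spherical expectation I will use Archimedes' theorem: each coordinate $\theta_i$ is uniform on $[-1,1]$. By symmetry, $E[\max_i\theta_i^2]=3E[\theta_3^2;\,\theta_3^2\ge\theta_1^2,\theta_2^2]$. I parametrize the sphere by $\theta_3=t\in[-1,1]$, which has uniform density $\tfrac12\,dt$, and an independent angle $\varphi$ uniform on $[0,2\pi)$, with
\[
\theta_1=\sqrt{1-t^2}\cos\varphi,\qquad \theta_2=\sqrt{1-t^2}\sin\varphi .
\]
The event $\theta_3^2\ge\max(\theta_1^2,\theta_2^2)$ becomes $t^2\ge(1-t^2)\max(\cos^2\varphi,\sin^2\varphi)$. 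For $|t|\ge 1/\sqrt2$ this always holds. For $1/\sqrt3\le|t|<1/\sqrt2$ it holds on a set of angles of measure proportional to $\arccos$ of something, and for $|t|<1/\sqrt3$ it never holds. Restricting to $t>0$ and using the eightfold symmetry in $\varphi$, this yields a one-dimensional integral of the form
\[
3\Bigl(\int_{1/\sqrt2}^1 t^2\,dt+\int_{1/\sqrt3}^{1/\sqrt2} t^2\,\frac{4}{\pi}\Bigl(\frac{\pi}{4}-\arccos\frac{t}{\sqrt{1-t^2}}\Bigr)dt\Bigr),
\]
with the angular fraction to be checked carefully.

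The main obstacle is evaluating the $\arccos$ integral in closed form. I would integrate by parts, differentiating $\arccos\bigl(t/\sqrt{1-t^2}\bigr)$ and integrating $t^2$. This reduces the problem to integrating a rational function of $t$ times $1/\sqrt{1-2t^2}$. The substitution $t=\sin u/\sqrt2$, or a tangent substitution, should finish it, and the constant $\sqrt3/\pi$ should emerge from the endpoint $t=1/\sqrt3$.

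As a fallback, I would instead compute directly with $E[M_3^2]=1+2\int_0^\infty x\,[1-(2\Phi-1)^3]\,dx-1$, integrating by parts to reach Gaussian integrals of the form $\int\phi(x)^2\,\Phi(x)\,dx$. The classical values $P(Z_1>0,Z_2>0)$ for correlated pairs (Sheppard's formula $\tfrac14+\tfrac{\arcsin\rho}{2\pi}$ with $\rho=1/2$, giving $\arcsin(1/2)=\pi/6$) and the $1/\sqrt3$ normalization produce the $\sqrt3/\pi$ term. Either way, I will sanity-check the result numerically: $1+2\sqrt3/\pi\approx2.1027$.
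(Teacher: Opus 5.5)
Your proposal is correct, and it takes a genuinely different route from the paper's. The paper stays one-dimensional. Starting from $E[M_3^2]=3\int_0^\infty x^2 fF^2\,dx$, it uses the Gaussian identity $xf=-f'$ to integrate by parts twice and reaches $E[M_3^2]=1+3\int_0^\infty f(x)^3\,dx$. Then $\int_0^\infty f^3\,dx=(2/\pi)^{3/2}\int_0^\infty e^{-3x^2/2}\,dx=\frac{2}{\sqrt3\,\pi}$ finishes it. In that route the $\sqrt3$ comes straight from the Gaussian integral $\int_0^\infty e^{-3x^2/2}\,dx$, and no bivariate orthant probabilities or Sheppard's formula are needed, so your fallback is heavier than necessary.

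Your main route instead lifts to $Z\in\mathbb R^3$ and uses independence of $\|Z\|$ and $\theta$ to factor $E[M_3^2]=E\|Z\|^2\,E[\max_i\theta_i^2]$. This isolates a purely geometric constant of $S^2$ and separates scale from shape; the same factorization holds for $E[M_3^{2k}]$. The price is a region analysis on the sphere and an $\arccos$ integral, whereas the paper's computation takes a few lines.

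The steps you left to be checked do go through. The angular fraction $\frac4\pi\bigl(\frac\pi4-\arccos\frac{t}{\sqrt{1-t^2}}\bigr)$ on $[1/\sqrt3,1/\sqrt2)$ is right. Set $J=\int_{1/\sqrt3}^{1/\sqrt2}t^2\arccos\frac{t}{\sqrt{1-t^2}}\,dt$; then your expression equals $1-\frac{1}{3\sqrt3}-\frac{12}{\pi}J$. Since $\frac{d}{dt}\arccos\frac{t}{\sqrt{1-t^2}}=-\frac{1}{(1-t^2)\sqrt{1-2t^2}}$, integration by parts gives $J=-\frac{\pi}{36\sqrt3}+\frac13\int_{1/\sqrt3}^{1/\sqrt2}\frac{t^3\,dt}{(1-t^2)\sqrt{1-2t^2}}$. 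The substitution $y=\sqrt{1-2t^2}$ turns the last integral into $\frac12\int_0^{1/\sqrt3}\frac{1-y^2}{1+y^2}\,dy=\frac\pi6-\frac{1}{2\sqrt3}$. Hence $E[\max_i\theta_i^2]=\frac13+\frac{2}{\sqrt3\,\pi}$, and therefore $E[M_3^2]=1+\frac{2\sqrt3}{\pi}$.
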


\begin{proof}
Let $f$ and $F$ be the density and distribution functions of $X_1$ respectively. 
Then the density function of $M_3$ is given by $g(x) = 3 f(x) F(x)^2$. 
Since $x f(x) = -f^{\prime}(x)$, it holds that 
\[ E\left[M_3^2 \right] = 3 \int_0^{\infty} x^2 f(x) F(x)^2 dx  = -3 \int_0^{\infty} x F(x)^2 f^{\prime}(x) dx. \]
By integration by parts, 
\[ \int_0^{\infty} x F(x)^2 f^{\prime}(x) dx = - \int_0^{\infty} (F(x)^2 + 2x f(x) F(x)) f(x) dx.  \]
Since $2x f(x)^2 = - \frac{d}{dx} f(x)^2$, we obtain that 
\[ \int_0^{\infty} (F(x)^2 + 2x f(x) F(x)) f(x) dx = \frac{1}{3} - \int_0^{\infty} F(x) (f(x)^2)^{\prime} dx = \frac{1}{3} + \int_0^{\infty} f(x)^3 dx.  \]
Hence, 
\[ E\left[M_3^2 \right] = 1 + 3 \int_0^{\infty} f(x)^3 dx = 1 + \frac{2\sqrt{3}}{\pi}. \]
\end{proof}

By Lemma \ref{lem:const-unique-zero}, 
if there exists a constant $C$ such that $S_3 = CM_3$ in distribution, 
then $C$ would be equal to $6^{1/3}$ and hence $3 + \frac{12}{\pi} = 6^{\frac{2}{3}} \left(1 + \frac{2\sqrt{3}}{\pi} \right)$. 
Then $\pi \in \mathbb{Q}\left(\sqrt{3}, 6^{1/3}\right)$, which contradicts the fact that $\pi$ is a transcendental  number. 
We see that $\frac{3 + \frac{12}{\pi}}{1 + \frac{2\sqrt{3}}{\pi}}$ is approximately $3.24$ and $6^{\frac{2}{3}}$ is approximately $3.30$; they are numerically close to each other. \\
\end{Rem}

\noindent{\it Acknowledgments.} \ The author would like to express his gratitude to two anonymous referees for their helpful comments.  
The author is supported by JSPS KAKENHI JP22K13928. \\

\bibliographystyle{plain}
\bibliography{disprove-AV}

\end{document}